\newtheorem{theorem}{Theorem}[section]
\newtheorem{corollary}[theorem]{Corollary}
\newtheorem{lemma}[theorem]{Lemma}
\newtheorem{proposition}[theorem]{Proposition}
\theoremstyle{definition}
\newtheorem{definition}[theorem]{Definition}
\newtheorem{remark}[theorem]{Remark}
\begin{document}


\title{Pairwise disjoint Moebius bands in space}

\author{Olga Frolkina\\
Chair of General Topology and Geometry,\\
Faculty of Mechanics and Mathematics,\\
M.V.~Lomonosov Moscow State University,\\
Leninskie Gory 1, GSP-1,\\
Moscow 119991, Russia\\
E-mail: olga-frolkina@yandex.ru
}

\bigskip

\maketitle

\def\conv{\operatorname{conv}}
\def\Int{\operatorname{Int}}
\def\id{\operatorname{id}}

\begin{abstract}
V.V.~Grushin and V.P.~Palamodov proved in
1962 that it is impossible
to place in $R^3$
uncountably many
pairwise disjoint polyhedra
each homeomorphic to the Moebius band.
We generalize this result in two directions.
First, we give a generalization of this result to tame subsets in $R^N$, $N\geqslant 3$.
Second, we show that in case of $R^3$ the theorem holds
for arbitrarily topologically embedded 
(not necessarily tame)
Moebius bands.

Keywords: Euclidean space, Moebius band,
topological embedding,
wild embedding.

MSC 2000: 57M30, 57N35, 54C25
\end{abstract}

\section{Introduction}

Theory of wild embeddings comes back
to papers of L.~Antoine, P.S.~Urysohn,
J.W.~Alexander 
from 1920's.
Let us mention
the reviews
\cite{BC}, \cite{Burgess-75}, \cite{Cernavsky}, \cite{Daverman}
and the
books \cite{DV}, \cite{Keldysh}, \cite{Rushing}
which
contain hundreds of references on the subject.

\begin{definition}
A subset $X\subset R^N$ is called a {\it polyhedron}
if it is a union of a finite collection of 
simplices.
A subset $X\subset R^N$ homeomorphic to a polyhedron
is called {\it tame}
if there is a homeomorphism $h$ of
$R^N$ onto itself such that
$h(X) \subset R^N$ is a polyhedron;
otherwise, $X$ is called {\it wild}.
An embedding $F:X\to R^N$ is called wild
if its image $F(X)$ is a wild subset of $R^N$,
and tame otherwise.
\end{definition}

For cells and spheres,  it is useful to compare
this with
the following definition:

\begin{definition}
A subset $X\subset R^N$ homeomorphic
to $I^k$ or $S^k$ is called {\it flat}
if there exists a homeomorphism $h$
of $R^N$ onto itself such that
$h(X)$ is a $k$-simplex
or
the boundary of a $(k+1)$-simplex.
\end{definition}

From classical results of A.~Schoenflies and L.~Antoine we know that each
arc and each simple closed curve in $R^2$
is flat \cite[II.4]{Keldysh}.
Antoine constructed
first wild arcs in $R^3$ 
in 
\cite[{\bf 54--58}, p.65--70; {\bf 83}, p. 97]{Antoine-diss}.
His first wild arc $ab$ 
\cite[{\bf 54--58}, p.65--70]{Antoine-diss}
consists 
of an arc $ap$ with a sequence of trefoils tied in it and convergent to the end point $p$, 
united with a straight line segment $pb$
(see the picture in \cite[Exercise 2.7.3]{Rushing}
or \cite[Fig.~2.26]{DV}; this example is
often attributed to R.L.~Wilder
because of
the footnote on page 634 of \cite{Wilder};
let us underline that the footnote contains reference to Antoine's Thesis \cite{Antoine-diss}).
Second wild arc \cite[{\bf 83}, p. 97]{Antoine-diss} contains a wild Cantor set (now called Antoine necklace).
Moreover, Antoine constructs an everywhere wild arc in $R^3$ \cite{Antoine-FM}.
Well-known Alexander horned sphere
\cite{Alexander-sphere}
and Antoine-Alexander sphere
\cite[p. 285]{Antoine-CR},
\cite{Alexander},
\cite[{\bf 21}, p.283--284]{Antoine-FM}
provide examples of wild surfaces
in $R^3$.
For $N\geqslant 3$,
each polyhedron $P\subset R^N$ with
${\dim P \geqslant 1}$
can be wildly embedded in $R^N$
(see
\cite{Ivanov-diss}
and \cite{Blankinship};
compare
\cite[{\bf 83}, p. 97]{Antoine-diss}, 
\cite{Antoine-CR},
\cite{Antoine-FM}).
In $R^N$, $N\geqslant 3$, there
exist 
everywhere wild cells and spheres
of all dimensions $1,\ldots , N-1$
(see \cite{Bing-everywhere}, 
\cite[p.86--88, p.73--74]{DV}, 
\cite[Thm. 2.6.1]{Rushing}).
It is known 
that for each $N$ and each $k\neq N-2$
any tame $k$-cell or $k$-sphere
in $R^N$ is flat,
refer e.g. to \cite[Cor.~1.1.2, Thm.~1.1.3, Cor.~1.1.6]{DV}.
For any $N$, each tame arc in $R^N$ is flat
\cite[p.125--126]{Keldysh}.
For $N\geqslant 4$
there exist tame but not locally flat
$(N-2)$-cells and $(N-2)$-spheres in $R^N$
\cite[Example 1.4.2]{DV}.

``Being wild'' should not be considered as a 
strange
exception: in the sense of Baire category,
most knots in $R^3$ are wild \cite[Thm.~1]{Milnor-wild} (see also \cite[Prop. 6.3.8]{DV});
most arcs in $R^3$ are wild \cite{Bothe-wild};
in contrast to this, 
most 2-spheres in $R^3$ are tame \cite{Bothe-wild}.

Classification of wild embeddings is a hard problem.
By \cite[Thm.5.4]{GKB}, the classification problem of embeddings of the Cantor set $\mathcal C$
in $R^3$
is at least as complicated as the classification of countable linear orders.
Using methods of descriptive theory it is shown in \cite{Kulikov} that the classification
problem for wild knots is strictly harder than that for countable structures.

R.H.~Bing proved that 
it is impossible to place in
$R^3$
uncountably many
pairwise disjoint wild closed
surfaces.
Scheme of proof is given in
\cite{Bing-abstr},
and the full proof needs results of
\cite{Bing59} and \cite{Bing-TAMS61};
also, proof of Bing is explained in
\cite[Thm.~3.6.1]{BC}. (The assumption of wildness is essential: concentric spheres 
of all positive radii is an uncountable family
of pairwise disjoint tame surfaces.)
J.R.~Stallings
constructed  \cite{Stallings}
a
family
of
pairwise disjoint wild 2-disks in
$R^3$ which
has a cardinality of continuum.
J.~Martin showed
that
all except countably many disks in such a
collection must be locally tame at their
interior points \cite{Martin} (this result was predicted in
\cite{Bing-TAMS61} where important facts needed
for Martin's arguments are proved).
R.B.~Sher 
modified Stallings' construction so that 
no two
disks of the family 
are ambiently homeomorphic, that
is, no self-homeomorphism of $R^3$ 
can map one disk onto another
\cite{Sher-note}.
For higher dimensions,
Bing 
non-embedding result 
has 
at the moment
only partial
generalizations, see
\cite[Thm. 1, 2]{Bryant},
\cite[Thm. 10.5]{Burgess-75},
\cite[p.383, Thm. 3C.2]{Daverman}.
The results of Stallings and Sher 
were generalized by the author
to the case of arbitrary
$R^N$, $N\geqslant 3$,
and arbitrary perfect compact
space embeddable in a hyperplane
$R^{N-1}\subset R^N$  \cite{Frolkina}, \cite{Frolkina-Thesis}.
During my Conference talk
\cite{Frolkina-Thesis}
I was asked by Professor
Scott Carter 
if it is possible to place incountably many 
pairwise disjoint
Moebius bands in $R^3$.
He informed that in prior years he knew the impossibility proof from Bob Williams (University of Texas).
I became very interested
in this question.
I found a paper of V.V.~Grushin and V.P.~Palamodov 
containing a partial answer
\cite[Thm.~3]{GP}:
{\it 
Let $P$ be a 2-dimensional connected polyhedron which
can be represented as a (finite) union of
$2$-simplices.
In $R^3$ one can place
uncountably many
pairwise disjoint polyhedra
homeomorphic to $P$ if and only if $P$ 
is orientable and each point $p\in P$ has a planar neighborhood.}
Let us underline that for the ``$\Rightarrow $''
direction, each copy of $P$
is assumed to be a polyhedron in $R^3$.
The proof given in \cite{GP}
essentially exploits 
this strong assumption.
In his Zentralblatt review to Grushin-Palamodov paper, H.G.~Bothe
states their
result \cite[Thm.~3]{GP}
in a more general form,
without assumption that
$P$ is a union of $2$-simplices
\cite{Bothe-review}.

Below, I generalize
result of V.V.~Grushin and V.P.~Palamodov in two directions:
1)
I 
replace $R^3$ by arbitrary $R^N$
but assume that the subsets are
tame  (Corollary~\ref{tame-N-impossible});
and 2)
I stay in $R^3$ but allow
the subsets to be wild (Proposition~\ref{M-3}, Corollary~\ref{M-3-criterion}). 
Corollary \ref{M-3-criterion} 
improves Bothe's assertion \cite{Bothe-review}.

\begin{remark}
In \cite{IN}, results of Grushin and Palamodov 
are represented in a much more stronger form than originaly given in \cite{GP}.
Unfortunately, this leads to several mistakes.
\\
1) ``Simple continua'' in terminology
of V.K.~Ionin and Yu.G.~Nikonorov need not be tame;
for example,
wild arc of Antoine-Wilder 
\cite[{\bf 54--58}, p.65--70]{Antoine-diss},   
\cite[Exercise 2.7.3]{Rushing}
is a ``simple continuum''.
\\
2) Theorem~1 in \cite{IN} is false;
wild Antoine-Wilder arc can serve as a counterexample.
(Its ``spaciality'' in terms of \cite{IN} 
can be derived from 
\cite[{\bf 56--57}, p.66--69]{Antoine-diss};
and it is not hard to show that
it is ``thin'' in terminology of \cite{IN}.
I am planning to explain this in detail
in another paper.)
\\
3) 
Let us analyze Theorem~2 of \cite{IN}.
Suppose that $K$ is a wild $(N-1)$-sphere in $R^N$,
where $N\neq 4$.
 In terminology of \cite{IN}, $K$ is ``thick'' 
by 
\cite{Bing-abstr}, \cite[Thm. 10.5]{Burgess-75}, \cite[p.383, Thm. 3C.2]{Daverman}. Assuming that \cite[Thm. 2]{IN}
is true, we conclude that $K$ can not be 
represented as a union
of a finite family of tame 
cells. This hard statement is of course
not contained in \cite{GP}.
(An $(N-1)$-sphere in $R^N$
which is the union of interiors
of tame $(N-1)$-cells is itself locally
flat and hence 
flat by \cite{Brown}.
If the interiors of the cells do not cover 
the sphere, the statement is proved under
strong
additional conditions in
\cite[Thm. 6.4]{CL} 
using A.V.~Chernavskii's result
\cite[Thm.~2]{Cernavsky-union},
\cite{Cernavsky-2006}.)
\end{remark}

\subsection{Conventions and Notation}

By a map we always mean a continuous map.

For any metric compact space $M$,
the space $C(M,R^N)$ 
of all maps $M\to R^N$
endowed with the uniform distance
is known to be a complete separable metric space.

By $\conv (S)$ we denote the convex hull of a set $S\subset R^N$.

An {\it $N$-manifold} is
a separable metric space
each point of which has a neighborhood
homeomorphic to $R^N$.
An {\it $N$-manifold-with-boundary}
is
a separable metric space
each point of which has a neighborhood
whose closure is homeomorphic to 
an $N$-simplex.
(So, any $N$-manifold is an $N$-manifold-with-boundary.)
For an $N$-manifold-with-boundary $M$, its
boundary is denoted by $\partial M$ and
its interior 
$M - \partial M$
by $\Int M $.

By $\overline Y$ we denote the closure of a set $Y$
in a given ambient space.

\section{Families of tame $(N-1)$-surfaces in $R^N$}

In their proof of \cite[Thm. 2]{GP},
Grushin and Palamodov show that if
two polyhedral Moebius bands in $R^3$
consisting of the same number of triangles
could be ``sufficiently close'' to each other,
than they would be orientable.
Our next Proposition is based on the same idea.
We make a more careful investigation;
and only
the ``limit'' Moebius band is assumed
to be tame.

In contrast to Grushin and Palamodov who measure
``nearness'' of two polyhedral Moebius bands by
considering broken lines with vertices 
in centers of the faces,
we use properties of space of maps
$C(M,R^N)$ (inspired by 
the Bing idea, see
\cite[Thm. 3.6.1]{BC}, \cite{Bing-abstr}); 
we thus obtain
Corollary \ref{tame-N-impossible}.

\begin{proposition}\label{tame-N}
Let $M$ be a compact triangulable
$(N-1)$-manifold-with-boundary,
and let $f_1,f_2,\ldots , F$ be its embeddings
into $R^N$ such that:
$F(M)$ is tame;
the sequence $\{f_n\}$ converges to $F$
in $C(M,R^N)$;
and 
$f_n(M)\cap F(M)=\emptyset $ for each~$n$.
Then $M$ is orientable.
\end{proposition}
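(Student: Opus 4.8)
The plan is to reduce to the case where $F(M)$ is polyhedral, to encode the (non)orientability of $M$ as the (connectedness) triviality of a ``two-sided'' double cover of $\Int M$ built from the local complementary components of $F(M)$, and then to use a single disjoint nearby copy $f_n(M)$ to manufacture a continuous section of this cover, which forces it to be trivial. First I would exploit tameness of $F(M)$: choose a self-homeomorphism $h$ of $R^N$ with $h(F(M))$ a polyhedron and replace $F$ by $h\circ F$ and each $f_n$ by $h\circ f_n$. Uniform convergence on the compact space $M$ and the disjointness $f_n(M)\cap F(M)=\emptyset$ are preserved, and orientability of $M$ is unaffected, so I may assume $F(M)$ is a polyhedral $(N-1)$-manifold-with-boundary. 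By invariance of the boundary, $F(\Int M)=\Int F(M)$ and $F(\partial M)=\partial F(M)$.

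Next I would set up the double cover. For an interior point $x=F(y)\in\Int F(M)$, the generalized Jordan--Brouwer separation (Alexander duality in a small ball) gives arbitrarily small connected neighborhoods $V\subset R^N$ of $x$ such that $V\setminus F(M)$ has exactly two components. Declaring these two local sides to be the fibre over $y$ produces a double cover $p:\widetilde M\to\Int M$. Since $R^N$ is orientable, a globally consistent choice of local side is precisely a co-orientation of $F(M)$, which together with a fixed orientation of $R^N$ orients $\Int M$; conversely a non-orientable $M$ admits no such choice. Hence $M$ is orientable if and only if $p$ is trivial, i.e.\ disconnected. Now fix a regular neighborhood $U$ of $F(M)$, chosen thin enough to be subordinate to a finite cover of $\Int F(M)$ by the separating charts $V$ above; since $f_n\to F$ uniformly and $F(M)$ is compact, there is an $n$ with $f_n(M)\subset U$, and by hypothesis $f_n(M)\cap F(M)=\emptyset$. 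Define $s:\Int M\to\widetilde M$ by letting $s(y)$ be the local side at $F(y)$ containing $f_n(y)$: because $f_n(y)$ is close to $F(y)$ and avoids $F(M)$, it lies in one of the two local sides, and this assignment varies continuously with $y$ (the side is locally constant on $U\setminus F(M)$ and $f_n$ is continuous), so $p\circ s=\id_{\Int M}$. A connected double cover admits no continuous section; therefore $p$ is trivial and $M$ is orientable.

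The main obstacle is the construction and interpretation of $p$ without assuming that $F(M)$ is locally flat. In codimension one I cannot take for granted that $F(M)$ is bicollared, so I deliberately avoid invoking a normal $I$-bundle and instead use the local-separation description, whose two local components are well defined purely from $F(M)$ being a codimension-one manifold. The technical heart is to check that these local sides assemble into a genuine double cover and that its triviality is equivalent to orientability of $M$ (rather than merely to bicollaredness of $F(M)$), arguing at the level of local homology. Once this is established, the convergence hypothesis enters only through the elementary but decisive point that one disjoint copy $f_n(M)$ lying in a thin neighborhood of $F(M)$ already determines a continuous choice of side, that is, a section of $p$.
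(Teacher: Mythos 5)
Your proposal runs on the same engine as the paper's proof --- after the identical reduction to polyhedral $F(M)$, the disjoint nearby copy is used to make a coherent choice of ``side'' of $F(M)$, which is incompatible with non-orientability --- but as written it has a genuine gap at $\partial F(M)$. You define the section $s$ on all of $\Int M$ using one fixed index $n$ and a finite family of separating charts, and that cannot work near the boundary. For ``the local side at $F(y)$ containing $f_n(y)$'' to be a well-defined element of the fibre $p^{-1}(y)$, the neighborhood used at $F(y)$ must be small compared with $d(F(y),\partial F(M))$: once a neighborhood $V$ reaches around the free edge of $F(M)$, a path in $V\setminus F(M)$ joins the two local sides at $F(y)$, so the map from genuine local sides to $\pi_0(V\setminus F(M))$ is no longer injective and membership of $f_n(y)$ in a component of $V\setminus F(M)$ determines nothing. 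But uniform convergence only gives $d(f_n(y),F(y))<\delta$ with one $\delta$ for all $y$; for $y$ with $d(F(y),\partial F(M))<\delta$ the point $f_n(y)$ may sit in the region around the free edge (e.g.\ ``beyond'' the edge of a half-plane), where it lies in no separating chart at $F(y)$ at all and belongs to no well-defined side. So $s$ is undefined on a neighborhood of $\partial M$ in $\Int M$, and the conclusion ``a section over the connected base $\Int M$ forces $p$ to be trivial'' does not go through as stated. The repair is exactly what the paper's choice of $\delta$ accomplishes: work over a compact subset of $\Int M$ that still witnesses non-orientability and stays at definite distance from the boundary --- the paper uses a cycle of $(N-1)$-simplices admitting no coherent orientation together with the broken lines through their centers, and takes $\delta$ smaller than the distance from these sets to $\partial(\Delta_i\cup\Delta_{i+1})$; in your language, restrict $s$ to the image of an orientation-reversing loop (or a shrunken copy of $M$), over which the chart sizes are bounded below and the restricted cover is still connected.

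Separately, what you yourself call the technical heart --- that the local sides assemble into a double cover of $\Int M$ whose triviality is equivalent to orientability of $M$, with no local flatness assumed --- is asserted rather than proved, and it is not a routine check: a polyhedral codimension-one manifold in $R^N$ need not be locally flat, its links are in general only homology $(N-2)$-spheres, and the sides-versus-local-orientations correspondence needs Alexander duality in the links plus a naturality argument. The paper avoids this machinery entirely: the point $f_K(P_i)$ lies off the hyperplane spanned by $\Delta_i$ (being $\delta$-close to the center $A_i$ and disjoint from $F(M)$), so it selects a normal direction, hence an orientation of $\Delta_i$; and the image of the broken line $A_iA_{i,i+1}A_{i+1}$, being $\delta$-close to it and disjoint from $\Delta_i\cup\Delta_{i+1}$, transfers that choice coherently across the shared face. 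That is your covering-space argument made elementary, simplex by simplex, with hyperplanes and normal vectors in place of local homology. So either supply the duality argument and restrict your section to a compact core, or adopt the paper's linear-position computation, which needs neither.
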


\begin{proof}
Since $F(M)$ is a tame subset of $R^N$,
there exists a homeomorphism $h:R^N\cong R^N$
such that $h(F(M))$ is a polyhedron.
Using uniform
continuity
of $h$ on a large compact ball 
$B\supset \Int B \supset F(M)$
we see
that the sequence of maps
$\{h\circ f_n\}$ converges to
$h\circ F$ in $C(M, R^N)$.
Therefore we will omit the homeomorphism $h$;
we will assume that $F(M)$ is already a polyhedron in $R^N$,
and the sequence 
$\{ f_n\}$ converges to $F$.

Let us assume that $M$ is non-orientable.
Fix any triangulation $\tau $ of the polyhedron $F(M)$.
There exists a sequence
of $(N-1)$-simplices 
$\Delta _1 , \ldots , \Delta _m$
of $\tau $
such that
each pair of adjacent simplices
(and also the last and the first one)
have a common $(N-2)$-face;
and this sequence does not possess
coherent orientations of its simplices.
We may also assume that
$\Delta _i \neq\Delta _j$ for $i\neq j$.

For each $i=1,\ldots , m$ let $A_i$ be the center of the simplex $\Delta _i$, and
$A_{i,i+1}$ be the center of the $(N-2)$-simplex
$\Delta _i \cap \Delta _{i+1}$.
(By $\Delta _{m+1}$ we mean
$\Delta _1$;
in particular, $A_{m+1} = A_1$,
and $A_{m,m+1}$ is the center of
the $(N-2)$-simplex $\Delta _m\cap \Delta _1$.)
Let $A_iA_{i,i+1}A_{i+1}$ be
the broken line defined as the union
of two segments:
$A_iA_{i,i+1}$ and $A_{i,i+1}A_{i+1}$.
Let us denote
$P_i = F^{-1} (A_i)$ and
$q_i = F^{-1}(A_iA_{i,i+1}A_{i+1})$
for each $i=1,\ldots , m$.

Take a positive number $\delta $ such that 
$$
 \delta < \min_{i=1,\ldots , m}
\{
d(A_i, \partial \Delta _i);
\ 
d(
A_iA_{i,i+1}A_{i+1}, \partial (\Delta _i\cup 
\Delta _{i+1}) )
\} .
$$

Take an integer $K$ such that
$d(f_K,F)<\delta $.

Fix an orientation of $R^N$.
For each $i=1,\ldots , m$ we have
$$
d(f_K (P_i) , A_i) =
d(f_K (P_i) , F(P_i))  < \delta < 
d(A_i, \partial \Delta _i);
$$
moreover,
$f_K(P_i) \notin \Delta _i \subset F(M)$.
Therefore the point 
$f_K(P_i)$
defines the direction of a normal vector 
for the hyperplane containing
the simplex $\Delta _i$.
Moreover, for each $i=1,\ldots , m$
we have
$
d(f_K(q_i), A_iA_{i,i+1}A_{i+1}) = 
d(f_K(q_i) , F(q_i)) < \delta $
and
$f_K(q_i)\cap (\Delta _i\cup\Delta _{i+1})
\subset f_K (M)\cap F(M) = \emptyset $;
hence
the orientations of
$\Delta _i$ and $\Delta _{i+1}$ thus defined
are coherent.
This contradiction finishes the proof.
\end{proof}

\begin{corollary}\label{tame-N-impossible}
It is impossible to place in $R^N$, $N\geqslant 3$
uncountably many
pairwise disjoint
tame 
homeomorphic
images of a compact triangulable
non-orientable ${(N-1)}$-manifold-with-boundary.
\end{corollary}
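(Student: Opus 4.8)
The plan is to reduce the statement to Proposition~\ref{tame-N} by a condensation-point argument in the map space $C(M,R^N)$, in the spirit of Bing. Suppose, for contradiction, that $\mathcal F$ is an uncountable family of pairwise disjoint tame subsets of $R^N$, each homeomorphic to the fixed compact triangulable \emph{non-orientable} $(N-1)$-manifold-with-boundary $M$. For each $X\in\mathcal F$ I would choose a homeomorphism $g_X\colon M\to X$ and regard it as a point of $C(M,R^N)$. Since distinct members of $\mathcal F$ have disjoint images, the maps $g_X$ are pairwise distinct, so $\{g_X : X\in\mathcal F\}$ is an uncountable subset of $C(M,R^N)$.

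Next I would invoke separability. Recall that $C(M,R^N)$ is a separable metric space, hence second countable. In any second-countable space all but countably many points of an uncountable subset are condensation points of that subset (equivalently, an uncountable subset cannot be discrete). I would therefore fix one condensation point lying in the set itself, i.e. an embedding $F=g_{X_0}$ such that every neighborhood of $F$ in $C(M,R^N)$ meets $\{g_X\}$ in uncountably many points. From this I can extract a sequence $f_n=g_{X_n}$ of pairwise distinct members with $X_n\neq X_0$ and $f_n\to F$ in $C(M,R^N)$.

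Finally I would check the hypotheses of Proposition~\ref{tame-N}. The limit $F$ is an embedding of $M$ whose image $F(M)=X_0$ is tame; the sequence $\{f_n\}$ converges to $F$; and since the images in $\mathcal F$ are pairwise disjoint with $X_n\neq X_0$, we have $f_n(M)\cap F(M)=X_n\cap X_0=\emptyset$ for every $n$. Proposition~\ref{tame-N} then forces $M$ to be orientable, contradicting the non-orientability of $M$. Hence no such uncountable family can exist, which is exactly the assertion.

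I expect the only subtle point to be the purely set-theoretic step: passing from an uncountable family to a sequence of pairwise distinct members converging to a limit that is again a member of the family. This is precisely where separability (second countability) of $C(M,R^N)$ enters, through the standard fact about condensation points quoted above; everything else is routine verification of the hypotheses of Proposition~\ref{tame-N}.
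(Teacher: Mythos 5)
Your proposal is correct and follows essentially the same route as the paper: both use separability of $C(M,R^N)$ to extract, from the uncountable family of embeddings, a sequence of distinct members converging to a member of the family itself, and then apply Proposition~\ref{tame-N}. Your condensation-point argument merely spells out the standard fact that the paper invokes implicitly in one line.
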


\begin{proof}
Suppose the contrary.
Let $\mathcal H = \{ h_\alpha : M\to R^N, \alpha \in A\}$
be an uncountable family of
embeddings such that
$h_\alpha (M)\cap h_\beta (M) =\emptyset $
for each $\alpha \neq \beta $, where
$M$ is a compact triangulable
non-orientable $(N-1)$-manifold-with-boundary.

The space
$C(M , R^N)$ is separable.
Hence its un\-count\-able subset $\mathcal H$
has a converging subsequence
of distinct elements
$h_{\alpha _m}$ (the limit map $h=\lim\limits_{m\to\infty }  h_{\alpha _m}$
also  belongs to $\mathcal H$).
To finish the proof, apply Proposition \ref{tame-N}.
\end{proof}

\section{Families of topological Moebius bands in $R^3$}

\begin{definition}
A set $X\subset R^N$  is called
{\it locally tame} at $x\in X$
if there is a neighborhood $U$ of $x$
in $R^N$ and a homeomorphism
$h$ of $\overline U$ into $R^N$ such that
$h(\overline{U} \cap X)$
is a polyhedron.
\end{definition}

By 
\cite{Bing-loc-tame}
or \cite{Moise-loc-tame},
closed locally tame subsets of $R^3$
are tame.

\begin{proposition}\label{M-3}
Let $M$ be a compact
non-orientable
2-manifold-with-boundary.
It is impossible to place
uncountably many 
pairwise disjoint
homeomorphic images of $M$
in
$R^3$.
\end{proposition}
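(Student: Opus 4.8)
The plan is to reduce the problem to the situation already handled by Proposition~\ref{tame-N}: I will show that the limiting embedding is tame near an orientation-reversing curve, and then quote the fact that a convergent sequence of disjoint copies cannot have an orientable source.

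First I would repeat the condensation-point argument from the proof of Corollary~\ref{tame-N-impossible}. Since $C(M,R^3)$ is separable, an uncountable family $\mathcal H=\{h_\alpha\}$ of pairwise disjoint embeddings contains a member $F=h_{\alpha_0}\in\mathcal H$ which is a condensation point of $\mathcal H$, so there is a sequence of distinct elements $f_n=h_{\alpha_n}\in\mathcal H$ with $f_n\to F$ in $C(M,R^3)$ and $f_n(M)\cap F(M)=\emptyset$ for every $n$. The essential gain over Corollary~\ref{tame-N-impossible} is that $F$ is an honest embedding (being a member of $\mathcal H$), so $F(M)$ is a genuine, though possibly \emph{wild}, copy of $M$. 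Next I would fix an orientation-reversing simple closed curve $C\subset\Int M$ together with a Moebius-band neighborhood $M'$ of $C$ with $M'\Subset\Int M$ (so that $\partial M'\subset\Int M$); this $M'$ is a compact non-orientable $2$-manifold-with-boundary, and $f_n(M')\to F(M')$ with $f_n(M')\cap F(M')=\emptyset$. If I can show that $F(M')$ is tame, then Proposition~\ref{tame-N} (with $N=3$), applied to $M'$, $F|_{M'}$ and $f_n|_{M'}$, forces $M'$ to be orientable, which contradicts the fact that $M'$ is a Moebius band. Thus the entire difficulty is concentrated in establishing tameness of $F$ on an interior sub-band, where the troublesome boundary $F(\partial M)$ has been pushed away.

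The main obstacle is therefore to prove that $F$ is \emph{locally tame at every interior point} of $M$. Granting this, $F|_{\Int M}$ is a locally flat embedding of an open $2$-manifold, and I may choose $M'$ so that $F(M')$ is a closed locally tame subset of $R^3$ (taking $\partial M'$ inside the locally flat locus), whence, by the cited theorem of Bing and Moise, $F(M')$ is tame, completing the reduction above. To obtain interior local tameness I would feed the disjoint approximating copies $f_n(M)$ into a Bing-type side-approximation / local-flatness criterion for the small disk $F(D)$ at each interior disk $D\ni p$: the disjoint disks $f_n(D)$ converging to $F(D)$ should allow one to engulf $F(D)$ from the approximating side and to certify $1$-LCC behaviour of the complement there. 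This is precisely the phenomenon underlying Martin's theorem that, in a disjoint family of disks in $R^3$, all but countably many members are locally tame at their interior points; adapting that technique to the present family is where the real work lies, and I expect this to be the hardest step.

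Finally, I note that once interior local tameness is in hand one can, if preferred, bypass the global tameness reduction altogether and instead re-run the argument of Proposition~\ref{tame-N} almost verbatim: triangulate the abstract band $M'$ by a cyclic sequence of simplices $\sigma_i$ witnessing non-orientability, and replace each affine simplex $\Delta_i$ there by the image patch $F(\sigma_i)$, which is now locally flat and hence has two well-defined sides. The nearby points $f_K(F^{-1}(\cdot))$ again select a coherent co-orientation along the sequence, so the cycle $\sigma_1,\dots,\sigma_m$ would be orientable, the desired contradiction. Either route isolates interior local tameness of the wild limit as the single nontrivial ingredient.
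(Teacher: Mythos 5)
There is a genuine gap, and it lies exactly where you placed your bets: the deferred ``hardest step'' --- that the condensation-point limit $F$ is locally tame at every interior point of $M$ --- is not merely unproven, it is false as stated. Being a uniform limit of pairwise disjoint embedded copies does not certify local tameness, because the approximation may be entirely one-sided. Concretely: let $D$ be a disk on the Alexander horned sphere containing the wild Cantor set in its interior, and let $B$ be the complementary domain whose closure is a topological $3$-cell; a collar of the boundary inside $B$ yields uncountably many pairwise disjoint parallel copies $D_t$, $t\in(0,1]$, converging uniformly to $D$. Thus $\{D\}\cup\{D_t\}$ is an uncountable pairwise disjoint family of disks whose condensation point $D$ fails to be locally tame at interior points. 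This is consistent with Martin's theorem \cite{Martin}, which asserts only that \emph{all but countably many} members of such a family are locally tame at interior points --- it does not assert this for every condensation point, and the separability argument gives you no control over which member of $\mathcal H$ arises as the limit. So the Bing-type $1$-ULC argument you hope to run cannot work for an arbitrary condensation point, and your closing alternative (re-running Proposition~\ref{tame-N} with locally flat patches $F(\sigma_i)$) relies on the same false premise.

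The repair is to reverse the order of the two steps, which is what the paper does. First apply Martin's Lemma~2 \cite{Martin} as a black box --- twice, to the two disks $D_1,D_2$ that cover the Moebius band $\mu$ --- to extract an uncountable subfamily $A_2$ in which \emph{every} band $h_\alpha(\mu)$ is locally tame at each interior point; then shrink to a sub-band $\tilde\mu\subset\Int\mu$ so that each $h_\alpha(\tilde\mu)$ is a closed locally tame set, hence tame by \cite{Bing-loc-tame}, \cite{Moise-loc-tame}; and only \emph{then} run the separability/condensation-point argument (Corollary~\ref{tame-N-impossible}, which invokes Proposition~\ref{tame-N}) inside this refined family, where the limit is automatically one of the already-tamed members. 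Your overall architecture --- reduce to a Moebius band, get interior local tameness, upgrade to tameness via Bing--Moise, conclude with Proposition~\ref{tame-N} --- matches the paper's; the single but fatal difference is that you fix the limit $F$ before taming, rather than taming the whole (co-countable) family before taking a limit.
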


\begin{proof}
It suffices to consider the case 
when $M$ is a (compact) Moebius band
$\mu = I\times I /  (0,t) \sim (1,1-t)$.
Let us assume that $\{ M_\alpha , \alpha \in A\}$
is an uncountable family
of pairwise disjoint subsets of $R^3$, and
$h_\alpha : \mu \cong M_\alpha $
is a homeomorphism for
each
$\alpha \in A$.

Let $\pi : I\times I \to \mu $
be the factorization map.
Represent
$\mu $ as a union of two disks:
$\mu = D_1 \cup D_2 ,
$
where
$$
D_1 = \pi \left(\left[ 0;\frac34\right] \times I\right);
\quad
D_2 = \pi \left(\left(\left[ 0;\frac14\right] \cup
\left[ \frac12 ; 1\right]\right) \times I\right).
$$

Apply \cite[Lemma 2]{Martin} to the uncountable family of 2-disks
$\{h_\alpha (D_1), \alpha \in A\}$.
There exists an uncountable subset
$A_1\subset A$ such that
each disk
$h_\alpha (D_1)$, $\alpha \in A_1$,
is locally tame at each
$x\in \Int h_\alpha (D_1)$.
Again, apply \cite[Lemma 2]{Martin} to the uncountable family of 2-disks
$\{ h_\alpha (D_2), \alpha \in A_1\}$;
there exists an uncountable subset $A_2\subset A_1$ such that
each
$h_\alpha (D_2)$, $\alpha \in A_2$,
is locally tame at each of its interior points.

We obtain an uncountable
set
$A_2$ such that
each disk
$h_\alpha (D_i)$, $\alpha \in A_2$, $i=1,2$,
is locally tame at each of its interior points.
Hence each Moebius band
$h_\alpha (\mu )$, $\alpha \in A_2$, 
is locally tame at each of its interior points.
Replace $\mu $ by 
a smaller Moebius band
$\tilde\mu  = \pi \left(I \times \left[\frac14 ; \frac34\right]\right) \subset \mu$;
we obtain
an uncountable family
of pairwise disjoint
locally tame
Moebius bands
$\{h_\alpha (\tilde \mu ), \alpha \in A_2\}$.
By
\cite{Bing-loc-tame}
or \cite{Moise-loc-tame},
each 
band
$h_\alpha (\tilde \mu )$, $\alpha \in A_2$
is tame.
Application of Corollary~\ref{tame-N-impossible} finishes the proof.
\end{proof}

Recall that a topological space $X$ is called {\it locally planar} if each point $x\in X$ 
has a neighborhood $U$ which can be embedded
in plane.

\begin{definition}\cite[p.165]{GP}\label{or}
Let $P$ be a locally planar
2-dimensional polyhedron (in some $R^N$). 
Take any of its triangulations.
$P$ is called {\it orientable}
if 
one can give
coherent orientations to
all its 2-simplices.
\end{definition}

Note that orientability implies 
local planarity;
but we prefer 
to write ``locally planar and orientable''
in order 
to avoid occasional 
misunderstanding.

\begin{remark}\label{or}
For a locally planar 2-dimensional polyhedron $P$,
the following are
equivalent to the above definition.

1) Let $\hat P$ be the union of all 2-dimensional simplices of $P$. Note that 
$\hat P$ is a 
homogeneous 
non-branching
2-dimensional complex
in the sense of
\cite[p.314, p.462]{Rinow}.
By \cite[43.24]{Rinow}
$\hat P = \cup P_i$
where each $P_i$
is a 2-dimensional pseudomanifold
and $\dim (P_i \cap P_j) \leqslant 0$
for each $i\neq j$.
We say that the polyhedron $P$ is orientable
if each pseudomanifold $P_i$
is orientable.

2) $P$ is called orientable
if it does not contain a 
homeomorphic copy of the Moebius band
(this is taken as a definition in \cite{Bothe-review}).
\end{remark}

Corollary \ref{M-3-criterion}
extends
\cite[Thm. 3]{GP}
and its generalization
stated in \cite{Bothe-review}.
First, let us prove a lemma
(compare
\cite[p.~167, Lemma]{GP}
where the case of 
polyhedra all whose maximal simplices
are 2-dimensional is considered).

\begin{lemma}\label{surfaces}
Let $P$ be a locally planar
2-dimensional polyhedron
(in some $R^N$).
Moreover, suppose that
$P$ is orientable.
Then there exists an orientable 2-manifold-with-boundary $M$ such that $P\subset M\subset R^N$
and $M$ is a polyhedron in $R^N$.
\end{lemma}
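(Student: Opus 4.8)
\emph{Overview.} The plan is to produce $M$ by adjoining to $P$, near a finite set of ``bad'' points, thin polyhedral pieces that convert $P$ into a $2$-manifold-with-boundary, and then to orient those pieces coherently with the given orientation of $P$. The hypothesis of local planarity is used only to pin down the local combinatorics of $P$; the hypothesis of orientability is used only at the very end.

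\emph{Local structure.} First I would record what local planarity buys combinatorially. Fixing a triangulation, and using invariance of domain, I would show that $P$ is non-branching (each $1$-simplex lies in at most two $2$-simplices) and that the link of every vertex $v$ in $\hat P$ is a $1$-manifold that is either a single circle or a disjoint union of arcs: the alternatives ``two or more circles'' and ``a circle together with an arc'' are excluded because the corresponding cones (two disks, or a disk and a sector, meeting at a single point) do not embed in the plane. Consequently the points at which $\hat P$ fails to be a surface-with-boundary are isolated, namely the finitely many vertices whose link is a union of at least two arcs; by Remark~\ref{or} these include both the pinch points lying inside a single pseudomanifold $P_i$ and the contact points of $P_i\cap P_j$, which are $0$-dimensional. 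Away from these finitely many vertices and from the lower-dimensional (dangling) simplices, $\hat P$ is already an orientable surface-with-boundary, oriented by the coherent orientation of each $P_i$.

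\emph{Local repair and embedding.} Next, at each bad vertex $x$ I would replace the local cone by an honest half-disk. In a small ball $B(x,\varepsilon)$ meeting $P$ only in the cone on the link of $x$, that link is a disjoint union of arcs $a_1,\dots,a_k$ lying on $\partial B(x,\varepsilon)$. I would add $k-1$ short arcs on this sphere, mutually disjoint and disjoint from the $a_j$; since $\dim\partial B(x,\varepsilon)=N-1\ge 2$ such arcs exist (for $N=3$ one routes them through the connected complement of finitely many disjoint arcs in $S^2$). Coning these connectors to $x$ adjoins finitely many triangles whose union with the old sectors is the cone on a single arc, i.e.\ a half-disk, so $x$ becomes a manifold boundary point. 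Dangling edges and isolated vertices are treated identically, by thickening each into a thin polyhedral flap inside such a ball. As the balls about distinct bad points are disjoint and all added material sits in general position, the union $M$ of $\hat P$ with the new triangles is an embedded compact $2$-manifold-with-boundary, a polyhedron in $R^N$, and contains $P$.

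\emph{Orientability and the main obstacle.} Finally I would orient $M$. Give each $2$-simplex of $P_i$ its orientation from the coherent orientation of $P_i$; this induces a direction on each link-arc $a_j$. At each bad vertex I would choose the connectors so as to splice the arcs \emph{head-to-tail}, producing a single consistently oriented boundary arc of the half-disk; this is exactly the condition for that half-disk to admit a coherent orientation extending the orientations of all sectors meeting at $x$, regardless of which pseudomanifolds they come from. Since the connectors are new and unconstrained, head-to-tail splicing is always possible, so no orientation obstruction can arise, and the orientations of all $2$-simplices of $M$ are coherent. The step I expect to be the crux is the \emph{local structure} paragraph: extracting from the bare hypothesis ``every point has a planar neighborhood'' the precise conclusions that branching is impossible, that each vertex link is a single circle or a union of arcs, and that only finitely many non-manifold points occur. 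Once this is in hand, the repair and the orientation bookkeeping are routine, the sole subtlety being the head-to-tail choice that makes coherence automatic.
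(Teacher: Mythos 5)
Your construction is essentially correct, but it takes a genuinely different and far more self-contained route than the paper. The paper's proof consists, in effect, only of your ``dangling simplices'' step: it replaces each isolated vertex of $P$ by a small triangle and each maximal $1$-simplex $\sigma$ by a narrow triangle $\conv(\sigma,B)$ meeting $P$ exactly in $\sigma$, thereby producing a locally planar orientable polyhedron $\tilde P\supset P$ all of whose maximal simplices are $2$-dimensional; it then invokes the lemma of Grushin and Palamodov \cite[p.~167--168, Lemma]{GP}, which treats exactly such polyhedra, to obtain the orientable polyhedral $2$-manifold-with-boundary $M$. Everything in your ``local structure,'' ``local repair'' and ``orientability'' paragraphs --- the link analysis forced by local planarity, the splicing of link arcs by connectors coned to the pinch point, and the head-to-tail orientation bookkeeping --- is your own reconstruction of the content of that cited lemma. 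The trade-off is clear: the paper's argument is two lines but opaque without access to \cite{GP}, while yours is longer but complete and makes explicit exactly where local planarity (the link structure) and orientability (the head-to-tail splicing) enter.

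Two slips in your write-up need repair, though neither is fatal. First, you ask that the $k-1$ connectors be ``mutually disjoint and disjoint from the $a_j$''; taken literally, coning such connectors to $x$ produces the cone on $2k-1$ pairwise disjoint arcs --- still not a manifold point --- contradicting your next sentence. What you intend (and what your head-to-tail paragraph confirms) is that each connector joins an endpoint of one $a_j$ to an endpoint of another, with interiors disjoint from everything else; only then is the union the cone on a single arc. Second, dangling edges cannot be ``treated identically \dots inside such a ball'': a maximal $1$-simplex need not lie near any bad vertex, and thickening it into a flap creates \emph{new} pinch points at the vertices where it attaches to $\hat P$ or to another flap. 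So the flaps must be added first --- precisely the order the paper uses --- and their link arcs then included among the $a_j$ in the repair and splicing at those attachment vertices; in the order you present the steps, the object produced by your repair stage ceases to be a manifold once the flaps are glued on. Finally, your routing of connectors on $\partial B(x,\varepsilon)$ requires $N\geqslant 3$; in the degenerate case $N=2$, which the statement permits, one can instead take a regular neighborhood of $P$ in $R^2$.
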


\begin{proof}
By \cite[p.167--168, Lemma]{GP}
it suffices 
to find 
a locally planar orientable
2-dimensional polyhedron $\tilde P$
with
$P\subset \tilde P\subset R^N$
such that
each maximal simplex of $\tilde P$
is 2-dimensional.

Fix any triangulation $\tau $ of $P$.
For
each isolated point $A$ of $P$
(0-dimensional maximal simplex of $\tau $), 
replace $A$ with a small triangle
$\Delta $ such that
$A$ is among its vertices
and
$\Delta \cap P = \{ A\} $.
Similarly, if $\sigma $ 
is a maximal 1-dimensional simplex of $\tau $,
replace it with a narrow triangle
$\Delta = \conv (\sigma , B)$,
where $B\notin \sigma $ is a point sufficiently
close to the midpoint of $\sigma $ so that
$\Delta \cap P = \sigma \cap P$.
It can be easily seen that the polyhedron
$\tilde P$
obtained after all replacements 
satisfies the conditions listed above.
\end{proof}

\begin{corollary}\label{M-3-criterion}
Let $P$ be a 2-dimensional polyhedron (in some
$R^N$).
The following are equivalent:
\\
(a)
in $R^3$ one can place
uncountably many
pairwise disjoint polyhedra
homeomorphic to $P$;
\\
(b)
in $R^3$ one can place
uncountably many
pairwise disjoint subsets
homeomorphic to $P$;
\\
(c) 
$P$ is locally planar and orientable.
\end{corollary}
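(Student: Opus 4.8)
The plan is to prove the cycle $(a)\Rightarrow(b)\Rightarrow(c)\Rightarrow(a)$. The implication $(a)\Rightarrow(b)$ is immediate, since every polyhedron in $R^3$ is in particular a subset of $R^3$, so an uncountable disjoint family of polyhedral copies is already an uncountable disjoint family of topological copies.

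For $(c)\Rightarrow(a)$ I would invoke Lemma~\ref{surfaces}. Assuming $P$ is locally planar and orientable, it is contained in an orientable $2$-manifold-with-boundary $M$ which is a polyhedron, $P\subset M\subset R^N$. Being a compact orientable $2$-manifold-with-boundary, $M$ admits a PL embedding $g\colon M\hookrightarrow R^3$, and since an orientable surface is two-sided, $g(M)$ has a PL product neighborhood; that is, $g$ extends to a PL embedding $G\colon M\times[-1,1]\hookrightarrow R^3$ with $G(x,0)=g(x)$. The slices $G(M\times\{t\})$, $t\in[-1,1]$, are pairwise disjoint polyhedral copies of $M$, and restricting each to the corresponding copy of $P$ yields uncountably many pairwise disjoint polyhedra homeomorphic to $P$. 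This is the routine direction.

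The substance is $(b)\Rightarrow(c)$, which I would prove contrapositively, separating the failure of orientability from the failure of local planarity. If $P$ is non-orientable, then by the pseudomanifold decomposition of Remark~\ref{or} some component of the union of the $2$-simplices of $P$ is a non-orientable $2$-pseudomanifold, and a regular neighborhood of an orientation-reversing loop of triangles through it (chosen to miss the singular vertices) is an embedded Moebius band $\mu\subset P$. An uncountable family of pairwise disjoint copies of $P$ then restricts to an uncountable family of pairwise disjoint copies of $\mu$, contradicting Proposition~\ref{M-3}. Hence $(b)$ forces orientability.

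It remains to show that $(b)$ forces local planarity, and here I expect the main obstacle. Assuming $P$ is not locally planar, inspection of the link of a non-planar point of a simplicial complex shows that $P$ must contain a ``branched'' subcomplex $Q$ of one of two types: a triple book $B_3$ (three $2$-simplices sharing a common $1$-simplex $e$), or two $2$-disks meeting in a single point interior to each; note that both can be orientable, so they are genuinely not detected by the previous step. Restricting the uncountable family gives uncountably many pairwise disjoint copies of $Q$, so it suffices to rule these out. I would first apply \cite[Lemma~2]{Martin} to each page and to each pairwise union of pages (a disk containing the branch set in its interior); after passing finitely many times to uncountable subfamilies I obtain copies that are locally tame along the branch set, hence, by \cite{Bing-loc-tame} or \cite{Moise-loc-tame}, tame. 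The crux is then to show that uncountably many pairwise disjoint \emph{tame} copies of $Q$ cannot exist. Here Corollary~\ref{tame-N-impossible} is of no help, since a branched complex embeds in no $2$-manifold; instead I would slice the branch locus transversally to produce triods in a plane and derive a contradiction from R.~L.~Moore's classical theorem that the plane admits no uncountable disjoint family of triods (with an analogous linking-sphere argument for the point-branch case). The two delicate points are (i) upgrading the pagewise local tameness to local tameness of the whole complex $Q$ along $e$, since a priori the ambient homeomorphisms flattening different pages need not agree, and (ii) organizing the transverse slices of the uncountably many disjoint branch sets so that a Moore-type counting argument applies; this branch-set counting, not the orientability step, is the heart of the matter.
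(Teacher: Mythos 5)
Your treatment of $(a)\Rightarrow(b)$, of $(c)\Rightarrow(a)$ (Lemma~\ref{surfaces} plus uncountably many parallel polyhedral copies of the ambient orientable surface), and of the orientability half of $(b)\Rightarrow(c)$ (extract a Moebius band, apply Proposition~\ref{M-3}) coincides with the paper's proof. The genuine gap is in the local-planarity half of $(b)\Rightarrow(c)$, and it is twofold. First, your structural dichotomy is false: a point with no planar neighborhood need not produce a triple book $B_3$, nor two disks meeting in a point interior to each. The basic obstruction is the \emph{umbrella}: a $2$-simplex $\Delta\subset P$ together with an arc $\alpha\subset P$ attached at a point $p\in\operatorname{Int}\Delta\cap\partial\alpha$. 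The umbrella itself is a $2$-dimensional, non-locally-planar polyhedron containing neither of your two models: it has a single sheet along the attaching point, and any disk inside it lies in $\Delta$, so no two disks in it can meet in exactly one point interior to both. Since $P$ could simply be the umbrella, your case analysis fails already at the first step; what non-local-planarity actually guarantees (and what the paper uses) is precisely such a pair $\Delta$, $\alpha$ inside $P$.

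Second, even for the configurations you do consider, what you offer is a program rather than a proof: you yourself flag as unresolved (i) that the pagewise local tameness obtained from \cite{Martin} need not assemble into local tameness of the branched complex along the binding, since the flattening homeomorphisms for different pairs of pages need not agree, and (ii) the Moore-type counting of transverse slices. The paper bypasses all of this with a single citation: by \cite{Young} (G.S.~Young's generalization of Moore's theorem on simple triods), $R^3$ cannot contain uncountably many pairwise disjoint \emph{topological} copies of the umbrella $\Delta\cup\alpha$ --- with no tameness hypothesis at all, so it applies directly to the subsets of hypothesis (b), and hence to $P$. In other words, the entire reduction through Martin--Bing--Moise tameness, which is the most delicate and incomplete part of your proposal (and is essentially an attempt to reprove Young's theorem from scratch), is unnecessary: the correct move is to locate an umbrella inside $P$ and quote \cite{Young}.
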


\begin{proof}
$(a)\Rightarrow (b)$ is evident.

Let us prove $(b)\Rightarrow (c)$.
\\
1) Suppose that a point $p\in P$
has no planar neighborhood.
There exist a 2-simplex $\Delta \subset P$
and an arc $\alpha \subset P$ such that
$p\in \Int \Delta  \cap \partial \alpha $.
By \cite{Young},
it is impossible to place uncountably many
disjoint
homeomorphic copies of the ``umbrella''
$\Delta \cup \alpha $ (hence also of $P$) 
in $R^3$.
(V.V.~Grushin and V.P.~Palamodov seem
to be unfamiliar with \cite{Young} while writing \cite{GP}, and they included the proof
of impossibility to place in $R^3$
uncountably many disjoint polyhedra
homeomorphic to the umbrella.)
\\
2) Assume that $P$ is non-orientable.
Then it contains a 
homeomorphic image of the Moebius band $\mu $.
By Proposition \ref{M-3}, it is
impossible to place uncountably many
disjoint
homeomorphic copies of $\mu $ 
(hence also of $P$) in $R^3$.

Finally, $(c)\Rightarrow (a)$
can be proved similarly to \cite{GP},
using Lemma \ref{surfaces}.
Namely,
there exists an orientable polyhedral 
2-manifold-with-boundary $M$ with
$P\subset M \subset R^N$.
It is known that $M$ is homeomorphic
to a sphere with handles and holes;
clearly we can place in $R^3$
uncountably many pairwise disjoint
polyhedra each PL-homeomorphic to $M$.
The inclusion $P\subset M$
can be interpreted as a PL embedding;
thus we obtain
uncountably many
disjoint homeomorphic polyhedral
copies of $P$ in $R^3$.
\end{proof}

\section*{Acknowledgments}

I would like to thank Sergey Melikhov
for useful remarks which
he made while reviewing this paper.

\end{document}